\theoremstyle{plain} 
\newtheorem{thm}{Theorem}[section]
\newtheorem{prop}[thm]{Proposition}
\newtheorem{lemma}[thm]{Lemma}
\newtheorem{question}[thm]{Question}
\newtheorem{conj}[thm]{Conjecture}
\theoremstyle{remark}
\theoremstyle{definition}
\newtheorem{defin}[thm]{Definition}
\newtheorem{claim}[thm]{Claim}
\newcommand{\la}{\langle}
\newcommand{\ra}{\rangle}
\begin{document}
	
	\title{A local-global conjugacy question arising from arithmetic dynamics}
	
	%    Remove any unused author tags.
	
	%    author one information
	\author[V.Goksel]{Vefa Goksel}
	\address{Mathematics \& Statistics Department\\ University of Massachusetts\\
		Amherst\\
		MA 01003, USA}
	\email{goksel@math.umass.edu}

	\subjclass[2000]{Primary }
	%    For articles to be published after 1 January 2010, you may use
	%    the following version:
	%\subjclass[2010]{Primary }
	
	\keywords{}
	
	\date{}
	
	\dedicatory{}
	
	\begin{abstract}
    In his earlier work, the author introduced a group theory question that arises in the study of iterated Galois groups of post-critically finite quadratic polynomials. In this paper, we prove the first non-trivial results on this question.
	\end{abstract}
	\subjclass[2010]{Primary 11R32, 12E05, 20E08, 37P15}
	
	\keywords{automorphism group of the rooted binary tree, local conjugacy, global conjugacy, iterated wreath product}
	\maketitle
	\section{Introduction}
	Let $K$ be a field, and $f\in K[x]$ a quadratic polynomial. We define the $n$th iterate of $f$ inductively by setting $f^n(x) = f(f^{n-1}(x))$ for $n\geq 1$, where we make the convention that $f^0(x)=x$. Suppose that all iterates of $f$ are separable over $K$. Then $f^n$-pre-images of $0$ form a complete binary rooted tree as follows: For each root of $\alpha$ of $f^{n-1}$, we draw edges from $\alpha$ to two roots $\beta_1,\beta_2$ of $f^n$, where $f(\beta_1) = f(\beta_2) = \alpha$. We will denote this \emph{pre-image tree} of $f$ by $T$. We also let $T_n$ be the subtree of the first $n$ levels of $T$. The absolute Galois group naturally acts on $T$, which gives a continuous homomorphism
	
	\[\rho: \text{Gal}(K^{\text{sep}}/K) \rightarrow \text{Aut}(T).\]
	This action on the binary rooted tree is the dynamical analog of the Galois action on the $\ell$-power torsion points on an elliptic curve. For this reason, $\rho$ is called an \emph{arboreal Galois representation}, which is a terminology introduced by Boston and Jones in 2007 (\cite{BJ07}).
	
	If we let $G_n(f)$ be the Galois group of $f^n$ over $K$, these Galois groups give an inverse system via the natural surjections $G_{n+1}(f)\twoheadrightarrow G_n(f)$. Letting $G(f):=\varprojlim G_n(f)$, we have $\text{im}(\rho) = G(f)$. The question of describing this image as a subgroup of $\text{Aut}(T)$ is one major open question in arithmetic dynamics. There has been a lot of recent work on this topic, see for instance \cite{Ahmad,BJ07,BJ09,BenJuul19,Benedetto20,Bridy19,FPC19,Ferraguti19,Gok19,Gottesman10,Gratton,Hamblen,Hindes18,Hindes2018,Ingram13,Jones08,Jones14,Juul19,Odoni1,Odoni2,Odoni3,Pink,Stoll} for a limited list. Based on the existent results, it is believed that $G(f)$ should have finite index in $\text{Aut}(T)$ for most quadratic polynomials $f$. See \cite[Conjecture 3.11]{Jones14} for a precise conjecture in this direction. However, similar to the fact that elliptic curves with complex multiplication are excluded in Serre's celebrated open image theorem, there are some well-known exceptional families of $f$ for which $G(f)$ has infinite index in $\text{Aut}(T)$. One such exceptional case is when $f$ is \emph{post-critically finite} (see below for a definiton).\par
	
	A polynomial $f$ is called \emph{post-critically finite} (or PCF in short) if all its critical points have finite orbits under iteration by $f$. The exact description of the profinite group $G(f)$ as a subgroup of $\text{Aut}(T)$ when $f$ is PCF is still mysterious in general, and partial results are known only in some special cases. See for instance \cite{Ahmad,BJ09,BJ07,Benedetto_et_al17,Benedetto20} on this topic. In \cite{Gok19}, for certain PCF quadratic polynomials $f$, the author used a Markov model to construct special profinite subgroups $M(f)$ of $\text{Aut}(T)$ which conjecturally contain $G(f)$. If this containment can be established, the author believes that the groups $M(f)$ can be a good candidate for a dynamical analog of Mumford-Tate groups in the classical theory of Galois representations for elliptic curves. The author gave some evidence indicating that a positive answer to certain special cases of a purely group theoretic question will imply this containment. In this paper, we will prove the first non-trivial results on this question. \par
	
	Before we introduce the aforementioned group theory question, we first briefly give some notation and definitions: The groups $\text{Aut}(T_n)$ form an inverse system via the natural surjections $\pi_n: \text{Aut}(T_n)\twoheadrightarrow \text{Aut}(T_{n-1})$. We set $K_n=\text{Ker}(\pi_n)$. It is well-known that $K_n$ is an elementary abelian $2$-group with rank $2^{n-1}$, i.e. we have $K_n\cong (\mathbb{Z}/2\mathbb{Z})^{2^{n-1}}$.
    \begin{defin}
	\label{def1.1}
	Let $H,G\leq \text{Aut}(T_n)$ be two subgroups of $\text{Aut}(T_n)$. We say that \emph{$H$ is elementwise $K_n$-conjugate into $G$} if each element of $H$ can be conjugated into $G$ by an element of $K_n$. We say that \emph{$H$ is globally $K_n$-conjugate into $G$} if $H$ can be conjugated into $G$ by an element of $K_n$.
	\end{defin}
    \begin{defin}
    \label{def1.2}
    Let $H,G\leq \text{Aut}(T_n)$ be two subgroups of $\text{Aut}(T_n)$. We say that \emph{$\mathcal{P}(H,G)$ holds} if we have
    $$H \text{ is elementwise }K_n\text{-conjugate into }G \iff H \text{ is globally }K_n\text{-conjugate into }G.$$
    \end{defin}
    Definition \ref{def1.2} naturally raises the question that we will study:
    \begin{question}
    \label{Q1.3}
    Let $n\geq 1$. For which subgroups $H,G\leq \text{Aut}(T_n)$ does $\mathcal{P}(H,G)$ hold?
    \end{question}
Question \ref{Q1.3} seems difficult in general. It would also be perhaps interesting to note that similar questions in the context of Lie groups already exist in the literature, which also have applications to number theory, particularly to Langland's global functoriality conjecture. See for instance \cite{Chen,Larsen,Larsen2,Wang1,Wang2,Wang3}. Specifically, compare the question in \cite[p.99]{Chen} with Question~\ref{Q1.3}.\par

Note that $\mathcal{P}(H,G)$ trivially holds when $H$ is a cyclic subgroup of $\text{Aut}(T_n)$. We now state the first non-trivial result regarding Question \ref{Q1.3}.
\begin{thm}
	\label{thm1.4}
Let $n\geq 1$ and $H,G\leq \text{Aut}(T_n)$. Suppose that $|H|=|G|$ and $H\cap K_n = \{\text{id}\}$. Then $\mathcal{P}(H,G)$ holds.
\end{thm}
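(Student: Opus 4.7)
The plan is to reformulate the conclusion as a $1$-coboundary statement and then exploit the fact that $K_n$ is a permutation module over $\mathbb{F}_2$ for the natural action of $H$. Writing $\pi:\mathrm{Aut}(T_n)\twoheadrightarrow\mathrm{Aut}(T_{n-1})$ for the natural surjection, I would first extract two preliminary structural consequences. For any $h\in H$ the elementwise hypothesis produces some $k_h\in K_n$ with $k_hhk_h^{-1}\in G$; applying $\pi$ and using $\pi(k_h)=\mathrm{id}$ yields $\pi(H)\subseteq\pi(G)$. Since $H\cap K_n=\{\mathrm{id}\}$, the map $\pi|_H$ is injective, so $|\pi(H)|=|H|=|G|\ge|\pi(G)|$, forcing $\pi(H)=\pi(G)$ and also $G\cap K_n=\{\mathrm{id}\}$. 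Consequently, the assignment $h\mapsto g(h)$, where $g(h)$ is the unique element of $G$ with $\pi(g(h))=\pi(h)$, is a well-defined group isomorphism $H\to G$.

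Next I would introduce the $1$-cochain $\lambda:H\to K_n$ defined by $\lambda(h):=g(h)h^{-1}$. Because $g$ is a homomorphism, a direct computation gives the cocycle identity $\lambda(h_1h_2)=\lambda(h_1)\cdot h_1\lambda(h_2)h_1^{-1}$ for the conjugation action of $H$ on $K_n$. Under this dictionary, $H$ is globally $K_n$-conjugate into $G$ precisely when $\lambda$ is a $1$-coboundary, i.e.\ when there is a single $k\in K_n$ with $\lambda(h)=khk^{-1}h^{-1}$ for every $h\in H$; the elementwise hypothesis, on the other hand, says only that each individual $\lambda(h)$ has the form $k_hhk_h^{-1}h^{-1}$ for some $k_h\in K_n$.

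Here I would use that $K_n\cong\mathbb{F}_2[L]$ is a permutation $H$-module, where $L$ denotes the set of vertices of $T_n$ at level $n-1$, on which $H$ acts via $\pi$. The crucial observation---the one place where the elementwise hypothesis is actually used---is that if $\pi(s)$ fixes some $\ell\in L$, then $\lambda(s)_\ell=0$: writing $\lambda(s)=k_ssk_s^{-1}s^{-1}$, conjugation by $s$ merely permutes the coordinates of $k_s$ according to $\pi(s)$, so when $\pi(s)$ fixes $\ell$ the two $\ell$-coordinates cancel in $\mathbb{F}_2$. With this in hand, I would build $k$ orbit by orbit: for each $H$-orbit $\calO\subseteq L$ fix a representative $\ell_0\in\calO$; for each $\ell\in\calO$ choose $h_\ell\in H$ with $\pi(h_\ell)(\ell_0)=\ell$ and set $k_\ell:=\lambda(h_\ell)_\ell$. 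The cocycle identity applied to $h_1=h_2\cdot(h_2^{-1}h_1)$, combined with the vanishing statement, shows that $k_\ell$ does not depend on the choice of $h_\ell$; and the same identity applied to $h_\ell=h\cdot h_{\pi(h)^{-1}(\ell)}$ verifies that the resulting $k=(k_\ell)\in K_n$ satisfies $\lambda(h)=khk^{-1}h^{-1}$ for every $h\in H$, producing the required global conjugator.

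The main obstacle is essentially the vanishing statement $\lambda(s)_\ell=0$ when $\pi(s)$ fixes $\ell$; once it is in hand, everything downstream reduces to formal bookkeeping with the cocycle condition. It is worth noting that both hypotheses play a role: $|H|=|G|$ is what forces $g$ to be a bijection (and hence makes $\lambda$ well defined), while $H\cap K_n=\{\mathrm{id}\}$ is what makes $\pi|_H$ injective and thereby lets us index $\lambda$ by $H$ itself rather than by a quotient.
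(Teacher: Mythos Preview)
Your argument is correct and is genuinely different from the paper's. You reformulate the problem cohomologically: after observing that $\pi|_H$ and $\pi|_G$ are isomorphisms onto the common image $\pi(H)=\pi(G)$, you define the $1$-cocycle $\lambda(h)=g(h)h^{-1}$ for the conjugation action of $H$ on the permutation module $K_n\cong\FF_2[L]$, and reduce global conjugacy to $\lambda$ being a coboundary. The elementwise hypothesis then translates exactly into the pointwise vanishing $\lambda(s)_\ell=0$ whenever $\pi(s)$ fixes $\ell$, and your orbit-by-orbit construction of $k$ is the explicit (Shapiro-lemma) verification that any cocycle on a permutation module satisfying this vanishing is a coboundary. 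Both the well-definedness check and the final verification unwind cleanly from the cocycle identity, as you indicate.

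By contrast, the paper proceeds by induction on $|H|$: it conjugates so that a maximal subgroup $H_1$ already sits inside $G$, writes $G=\langle H_1,H_2^b\rangle$ for a second maximal subgroup $H_2$, and then reduces via several lemmas to showing that a certain vector $u\in\FF_2^{2^{n-1}}$ lies in $\Fix(\pi_n(H_1))+\Fix(s)$. This last step is handled by two technical propositions (Propositions~4.1 and~4.4) that impose and then exploit combinatorial conditions on coordinates of $u$. Your approach bypasses all of this: it is shorter, avoids induction and the auxiliary lemmas about maximal subgroups and Frattini subgroups, and makes transparent that the heart of the matter is a property of permutation modules rather than anything specific to iterated wreath products. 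The paper's route, on the other hand, stays closer to the concrete wreath-product coordinates and develops intermediate results (e.g.\ Lemmas~4.3--4.5) that may be reusable for variants of the question where the trivial-intersection hypothesis fails.
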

The proof of Theorem \ref{thm1.4} requires one to use the iterated wreath product structure of $\text{Aut}(T_n)$.\par
The structure of the paper will be as follows: In Section~\ref{sec:sec2}, we will give some background and motivation for Question~\ref{Q1.3}. In Section~\ref{sec:sec3}, we will present some preliminaries from group theory. In Section~\ref{sec:sec4}, we will prove some auxiliary lemmas that will be crucial in the proof of Theorem \ref{thm1.4}. We will prove Theorem \ref{thm1.4} in Section~\ref{sec:sec5}. We will finish the paper by stating a conjecture in Section~\ref{sec:sec6}.
\section{Background and motivation}
\label{sec:sec2}
Let $f\in \mathbb{Z}[x]$ be a monic PCF quadratic polynomial. It is well-known (and easy to prove) that all such polynomials are conjugates of $x^2$, $x^2-1$ or $x^2-2$ by the linear map $x\rightarrow x+a$ for $a\in \mathbb{Z}$. Let $G_n(f)$ denote the Galois group of the $n$-th iterate of $\mathbb{Q}(i)$, where we choose the base field as $\mathbb{Q}(i)$ for a technical reason, as explained in \cite[Section 2]{Gok19}. In \cite{Gok19}, the author used the factorization data of iterates of $f$ over finite fields together with a Markov process to construct the \emph{Markov groups} $M_n(f)$, and conjectured that $G_n(f)$ is contained in $M_n(f)$ for $n\geq 1$. Moreover, the author made a connection between Question \ref{Q1.3} and this conjecture. Namely, the author gave an argument indicating that $G_n(f)$ is elementwise $K_n$-conjugate into $M_n(f)$ for all $n$. If this is true, then establishing that $\mathcal{P}(G_n(f),M_n(f))$ holds for all $n$ will prove the conjecture above. Note that groups $M_n(f)$ are explicitly known, so this is a quite special case of Question~\ref{Q1.3}.\par

To give an example of groups $M_n(f)$, let $f:=(x+a)^2-a-1$ for some $a\in \mathbb{Z}$. This family is particularly interesting because it does not come from endomorphisms of algebraic groups. Recall that for $n\geq 1$, the permutations $a_1=(1,2)$, $a_2=(1,3)(2,4)$, \dots, $a_n=(1,2^{n-1}+1)(2,2^{n-1}+2)\cdots (2^{n-1},2^n)$ are the standard generators of $\text{Aut}(T_n)$. Then, $x_n:=a_1a_2\cdots a_n \in\text{Aut}(T_n)$ acts transitively on the $n$-th level of the tree \cite[Lemma 3.3]{Gok19}. One can also think of $x_i$ for $i<n$ as an element of $\text{Aut}(T_n)$, by using the natural inclusion $\text{Aut}(T_i)\hookrightarrow \text{Aut}(T_n)$. Finally, define $m_1:=\text{id}\in \text{Aut}(T_1)$, and $m_{n+1}:=x_n^2m_nx_n^{-1}$ for $n\geq 1$. Then, if $a\neq \pm b^2$ for any $b\in \mathbb{Z}$, we have $M_1(f)=\langle (1,2)\rangle$, $M_2(f) = \langle (1,3,2,4),(1,2)\rangle$, and $M_n(f)$ is given by 
\begin{equation}
\label{eq:Markov_group}
M_n(f)=\langle x_n,m_n,x_{n-1}^2,x_{n-2}^2,\dots,x_2^2\rangle
\end{equation} 
for $n\geq 3$. If $a = \pm b^2$ for some $b\in \mathbb{Z}$, then $M_n(f)$ is an explicit index-$2$ subgroup of the group given in (\ref{eq:Markov_group}) (See \cite[Corollary 6.28]{Gok19}). For $n\geq 1$, the author constructed the groups $M_n(f)$ inductively, by introducing the so-called \emph{Markov map}, which can be applied to above generators of $M_n(f)$ to obtain generators of $M_{n+1}(f)$. See \cite[Section 6]{Gok19} for more details.

It also follows from the work of the author in \cite{Gok19} that the parameters $a$ used above can be taken in $\mathbb{Q}$, and the main results of \cite{Gok19} will still hold. The author conjectures that the groups $M_n(f)$, the so called \emph{Markov groups}, exist for \emph{any} PCF quadratic polynomial over any number field, and that $G_n(f)$ is always contained in $M_n(f)$ for every $n$. This conjecture is based on extensive Magma computations combined with the theoretic evidence given in \cite{Gok19}, and is an ongoing project of the author. The author believes that there is potentially a rich theory behind these Markov groups, which may eventually lead to a dynamical analog of Mumford-Tate groups, as alluded to in the previous section.
\section{Preliminaries}
\label{sec:sec3}
In this section, we will recall some standard facts from group theory that will be necessary in the proof of Theorem \ref{thm1.4}. We set $W_n=\text{Aut}(T_n)$ for simplicity, which is the notation that we will be using throughout the paper. It is well-known that $W_n$ is isomorphic to the $n$-fold wreath product of $C_2$ (the cyclic group of order $2$), which can be inductively given by $W_1\cong C_2$ and $W_n\cong W_{n-1}\wr C_2$ for $n\geq 2$. By definition, this allows us to identify $W_n$ with the semi-direct product $\mathbb{F}_2^{2^{n-1}}\rtimes W_{n-1}$, where $\mathbb{F}_2^{2^{n-1}}$ stands for the $2^{n-1}$-dimensional vector space defined over $\mathbb{F}_2$, the finite field with $2$ elements. In this semi-direct product, $W_{n-1}$ acts on $\mathbb{F}_2^{2^{n-1}}$ simply by permuting the coordinates. Concretely, for $v=(v_1,\dots,v_{2^{n-1}})\in \mathbb{F}_2^{2^{n-1}}$ and $\sigma\in W_{n-1}$, we have
$$\sigma(v) = (v_{\sigma^{-1}(1)},\dots, v_{\sigma^{-1}(2^{n-1})}).$$
Hence, throughout the article, we will think of the elements of $W_n$ as pairs $(v,s)$, $v\in \mathbb{F}_2^{2^{n-1}}, s\in W_{n-1}$. With this notation, if $\sigma_1=(v,s),\sigma_2=(w,t)\in W_n$ are two elements in $W_n$, their product is given by
\begin{equation}
\label{eq1}
\sigma_1\sigma_2 = (v+s(w),st).
\end{equation}
If $G:=\{g_1,\dots,g_k\}\leq W_n$, where $g_i= (v_i,s_i)$ for $i=1,\dots,k$, we will also often use the identification
\begin{equation}
\label{eq2}
\pi_n(G) = \{s_1,\dots, s_k\}\leq W_{n-1}.
\end{equation}
By the iterated wreath product definition of $W_n$, it immediately follows that each $W_n$ is a $2$-group for all $n\geq 1$, thus are all its subgroups. In fact, it is well-known that $W_n$ is isomorphic to the Sylow $2$-subgroup of $\text{Sym}(2^n)$ for all $n\geq 1$, where $\text{Sym}(2^n)$ stands for the full symmetric group of degree $2^n$.

Recall that for a group $X$, \emph{the Frattini subgroup of X}, denoted by $\Phi(X)$, is defined as the intersection of all maximal subgroups of $X$. It is well-known that for a $2$-group $X$, $\Phi(X)$ is generated by squares and commutators, i.e. we have
\begin{equation}
\label{eq3}
\Phi(X) = X^2[X,X].
\end{equation}
Finally, we introduce the following notation, which will be very convenient in calculations throughout the paper.
\begin{defin}
\label{def2.1}
	Let $\sigma\in W_n$ for some $n\geq 1$. We define the set $\text{Fix}(\sigma)$ by
	$$\text{Fix}(\sigma)=\{v\in \mathbb{F}_2^{2^n}\text{ }|\text{ }\sigma(v)=v\}.$$
\end{defin}
\section{Auxiliary lemmas}
\label{sec:sec4}
In this section, we will prove several auxiliary lemmas that will be used in the proof of Theorem \ref{thm1.4}.
\begin{lemma}
\label{lem3.1}
For $n\geq 1$, let $H,G\leq W_n$ be two subgroups of $W_n$. Suppose that H is elementwise $K_n$-conjugate into $G$, $|H|=|G|$ and $H\cap K_n = \{\text{id}\}$. Then $G\cap K_n = \{\text{id}\}$.
\end{lemma}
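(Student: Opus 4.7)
The plan is to exploit the fact that $K_n$ is the kernel of $\pi_n\colon W_n\twoheadrightarrow W_{n-1}$, hence normal, and therefore conjugation by any element of $K_n$ is invisible after applying $\pi_n$. Concretely, for any $h\in W_n$ and $k\in K_n$, one has $\pi_n(khk^{-1})=\pi_n(h)$. This will let me transfer the elementwise conjugacy hypothesis down to an inclusion of the images in $W_{n-1}$.

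First I would take the elementwise $K_n$-conjugacy of $H$ into $G$: for each $h\in H$ choose $k_h\in K_n$ with $k_hhk_h^{-1}\in G$. Applying $\pi_n$ and using the observation above gives $\pi_n(h)=\pi_n(k_hhk_h^{-1})\in \pi_n(G)$, so $\pi_n(H)\subseteq \pi_n(G)$.

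Next I would do the counting. The hypothesis $H\cap K_n=\{\text{id}\}$ says exactly that $\pi_n|_H$ is injective, so $|\pi_n(H)|=|H|$. On the other hand $\pi_n|_G$ is always at most a bijection onto its image, giving $|\pi_n(G)|\leq |G|$. Combining with $|H|=|G|$:
\[
|G|=|H|=|\pi_n(H)|\leq |\pi_n(G)|\leq |G|.
\]
Equality forces $|\pi_n(G)|=|G|$, i.e.\ $\pi_n|_G$ is injective, which is the same as $G\cap K_n=\{\text{id}\}$.

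There is no real obstacle here; the lemma is essentially a pigeonhole argument, and the only substantive input is the normality of $K_n$ in $W_n$ (so that conjugation by $K_n$ does not change the $\pi_n$-image). I would keep the writeup short and save the genuine work for the later lemmas where the elementwise-to-global conjugacy step actually bites.
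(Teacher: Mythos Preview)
Your proof is correct and follows essentially the same approach as the paper: use that $K_n=\ker\pi_n$ so conjugation by $K_n$ is invisible under $\pi_n$, deduce $\pi_n(H)\subseteq\pi_n(G)$, and then count. Your version is in fact a bit cleaner than the paper's, which routes the same argument through a choice of generators and words rather than working directly with all elements of $H$.
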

\begin{proof}
Let $|H|=|G|=k$. If $H$ is $\ell$-generated, set $H=\langle x_1,x_2,\dots,x_{\ell}\rangle$. Then we can write $H=\{h_1,h_2,\dots,h_k\}$, where $h_i = w_i(x_1,x_2,\dots,x_{\ell})$ for some word $w_i$ in $x_1,x_2,\dots,x_{\ell}$ for $i=1,\dots,k$. Since $H$ is elementwise $K_n$-conjugate into $G$, $x_1^{a_1},x_2^{a_2},\dots,x_{\ell}^{a_{\ell}}\in G$ for some $a_1,a_2,\dots a_{\ell}\in K_n$. Note that $\pi_n(w_i(x_1,x_2,\dots,x_{\ell})) = \pi_n(w_i(x_1^{a_1},x_2^{a_2},\dots,x_{\ell}^{a_{\ell}}))$ since $a_1,\dots,a_{\ell}\in K_n=\text{Ker}(\pi_n)$, hence $\pi_n(w_i(x_1^{a_1},x_2^{a_2},\dots,x_{\ell}^{a_{\ell}})) \neq \pi_n(w_j(x_1^{a_1},x_2^{a_2},\dots,x_{\ell}^{a_{\ell}}))$ for $i\neq j$ since $H\cap K_n = \{\text{id}\}$. This gives that $|G|\geq |\pi_n(G)|\geq k = |G|$, hence $|\pi_n(G)| = |G|$, which proves $G\cap K_n = \{\text{id}\}$, as desired.
\end{proof}
\begin{lemma}
\label{lem3.2}
For $n\geq 1$, let $H,G\leq W_n$ be two subgroups of $W_n$. Suppose that H is elementwise $K_n$-conjugate into $G$, $|H|=|G|$ and $H\cap K_n = \{\text{id}\}$. Then for any $H_1\leq H$, there exists $G_1\leq G$ such that $|H_1|=|G_1|$ and $H_1$ is elementwise $K_n$-conjugate into $G_1$.
\end{lemma}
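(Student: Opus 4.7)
The plan is to use the projection $\pi_n : W_n \twoheadrightarrow W_{n-1}$ to transfer subgroups of $H$ to subgroups of $G$. By Lemma \ref{lem3.1}, the hypotheses force $G \cap K_n = \{\text{id}\}$, so both restrictions $\pi_n|_H$ and $\pi_n|_G$ are injective homomorphisms, and hence bijections onto their images $\pi_n(H)$ and $\pi_n(G)$, each of size $k := |H| = |G|$.

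My first step will be to show that in fact $\pi_n(H) = \pi_n(G)$. For each $h \in H$ the elementwise conjugation hypothesis gives some $a_h \in K_n$ with $a_h h a_h^{-1} \in G$; since $a_h \in \ker \pi_n$, we have $\pi_n(h) = \pi_n(a_h h a_h^{-1}) \in \pi_n(G)$. Thus $\pi_n(H) \subseteq \pi_n(G)$, and the equality of cardinalities forces $\pi_n(H) = \pi_n(G)$.

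Given now any subgroup $H_1 \leq H$, I will define
$$G_1 := (\pi_n|_G)^{-1}\bigl(\pi_n(H_1)\bigr).$$
Since $\pi_n|_G$ is an injective homomorphism onto $\pi_n(G) \supseteq \pi_n(H_1)$, the preimage $G_1$ is a subgroup of $G$ with $|G_1| = |\pi_n(H_1)| = |H_1|$, the latter equality because $\pi_n|_H$ is also injective (as $H\cap K_n = \{\text{id}\}$).

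Finally, to check that $H_1$ is elementwise $K_n$-conjugate into $G_1$, take any $h \in H_1$. By the hypothesis applied to $H$, there exists $a_h \in K_n$ with $g := a_h h a_h^{-1} \in G$. But $\pi_n(g) = \pi_n(h) \in \pi_n(H_1)$, so $g \in G_1$ by the definition of $G_1$. No real obstacle is expected here; the only subtlety is recognizing that the elementwise-$K_n$-conjugacy together with $H \cap K_n = \{\text{id}\}$ and the cardinality assumption forces $\pi_n$ to match $H$ and $G$ bijectively onto the same quotient subgroup of $W_{n-1}$, after which the desired $G_1$ is essentially uniquely determined.
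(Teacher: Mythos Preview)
Your proof is correct and follows essentially the same approach as the paper. Both arguments hinge on Lemma~\ref{lem3.1} (so that $\pi_n$ is injective on $H$ and on $G$) together with the observation that $K_n$-conjugation preserves $\pi_n$-images; the only cosmetic difference is that the paper defines $G_1$ as the subgroup generated by chosen $K_n$-conjugates of the elements of $H_1$, while you define it directly as $(\pi_n|_G)^{-1}(\pi_n(H_1))$---these are in fact the same subgroup.
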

\begin{proof}
Let $H_1=\{h_1,\dots,h_k\}\leq H$. Since $H$ is elementwise $K_n$-conjugate into $G$, there exists $a_1,\dots,a_k\in K_n$ such that $h_i^{a_i}\in G$ for $i=1,\dots,k$. We set $G_1=\langle h_1^{a_1},\dots,h_k^{a_k}\rangle$. Clearly $H_1$ is $K_n$-conjugate into $G_1$. We also claim that $|H_1|=|G_1|$. To see this: Note that by Lemma \ref{lem3.1}, $G\cap K_n = \{\text{id}\}$, hence $G_1\cap K_n = \{\text{id}\}$. Since $a_1,\dots,a_k\in K_n$, we have $\pi_n(H_1) = \pi_n(G_1)$. Since we have $H_1\cap K_n = G_1\cap K_n = \{\text{id}\}$, this immediately implies $|H_1|=|G_1|$, as desired. 
\end{proof}
\begin{lemma}
\label{lemma_late1}
For $n\geq 1$, let $H,G\leq W_n$ be two non-cyclic subgroups of $W_n$ such that $H\cap K_n = \{\text{id}\}$, $|H|=|G|$, and $H$ is elementwise $K_n$-conjugate into $G$. For two distinct maximal subgroups $H_1, H_2\trianglelefteq H$, suppose that $H_1^a,H_2^b\leq G$ for some $a,b\in K_n$. Then $G=\langle H_1^a,H_2^b\rangle$.
\end{lemma}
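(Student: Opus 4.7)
The plan is to exploit three facts in combination: (i) $H$ is a $2$-group, so its distinct maximal subgroups $H_1, H_2$ have index $2$, both contain $\Phi(H)$, and their set-product equals $H$; (ii) $a, b \in K_n = \ker(\pi_n)$, so conjugation by them is invisible to $\pi_n$; and (iii) Lemma \ref{lem3.1} forces $G\cap K_n = \{\text{id}\}$, which makes $\pi_n$ injective on $G$ and thus reduces an equality of subgroups of $G$ to an equality of their images in $W_{n-1}$.

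First I would show $H_1 H_2 = H$ as a set. Since $H$ is a $2$-group and $H_1, H_2$ are maximal, they are normal of index $2$, so they both contain $\Phi(H)$ and correspond to distinct hyperplanes in the $\mathbb{F}_2$-vector space $H/\Phi(H)$. Being non-cyclic, $H$ has $\dim H/\Phi(H) \geq 2$, so two distinct hyperplanes span the whole space, giving $H_1 H_2 = H$. Taking images under $\pi_n$, and using that $\pi_n$ is injective on $H$ (since $H\cap K_n = \{\text{id}\}$), we get
\[
\pi_n(H_1)\,\pi_n(H_2) \;=\; \pi_n(H_1 H_2) \;=\; \pi_n(H),
\]
which is therefore already a subgroup of $W_{n-1}$ of order $|H|$.

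Next I would translate everything through $\pi_n$. Because $a,b \in K_n$, the obvious identities $\pi_n(H_1^a) = \pi_n(H_1)$ and $\pi_n(H_2^b) = \pi_n(H_2)$ hold. Therefore
\[
\pi_n\bigl(\langle H_1^a, H_2^b\rangle\bigr) \;=\; \langle \pi_n(H_1), \pi_n(H_2)\rangle \;\supseteq\; \pi_n(H_1)\,\pi_n(H_2) \;=\; \pi_n(H),
\]
and since $\pi_n(H_1), \pi_n(H_2) \subseteq \pi_n(H)$ we also have the reverse inclusion, giving
\[
\pi_n\bigl(\langle H_1^a, H_2^b\rangle\bigr) \;=\; \pi_n(H),
\]
which has order $|H| = |G|$.

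Finally I would close the argument by invoking Lemma \ref{lem3.1} to get $G\cap K_n = \{\text{id}\}$, so $\pi_n$ is injective on $G$ and hence on the subgroup $\langle H_1^a, H_2^b\rangle \leq G$. Injectivity gives
\[
\bigl|\langle H_1^a, H_2^b\rangle\bigr| \;=\; \bigl|\pi_n(\langle H_1^a, H_2^b\rangle)\bigr| \;=\; |H| \;=\; |G|,
\]
and combined with the inclusion $\langle H_1^a, H_2^b\rangle \leq G$ this forces equality, as claimed. I expect no real obstacle here: the only subtle point is the set-theoretic equality $H_1 H_2 = H$ for distinct maximal subgroups, which is standard for $p$-groups via the Frattini quotient, and from then on everything follows by tracking orders through the injective map $\pi_n|_H$ and $\pi_n|_G$.
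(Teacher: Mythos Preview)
Your proof is correct and follows essentially the same route as the paper's: both arguments use that distinct maximal subgroups of $H$ generate $H$, that conjugation by elements of $K_n$ is invisible to $\pi_n$, and that Lemma~\ref{lem3.1} makes $\pi_n$ injective on $G$, then finish by an order count. The only cosmetic difference is that you establish the set-theoretic equality $H_1H_2=H$ via the Frattini quotient, whereas the paper simply notes $\langle H_1,H_2\rangle=H$ (immediate since a group generated by two distinct maximal subgroups strictly contains each); your version is slightly stronger than needed but perfectly fine.
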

\begin{proof}
Since $H_1$ and $H_2$ are distinct maximal subgroups of $H$, we clearly have $H=\langle H_1,H_2\rangle$. Note that since $a,b\in K_n$, we have $\pi_n(H)=\pi_n(\langle H_1,H_2\rangle)=\pi_n(\langle H_1^a,H_2^b\rangle)\leq \pi_n(G)$. Since $H\cap K_n=\{\text{id}\}$ and $G\cap K_n=\{\text{id}\}$ (by Lemma \ref{lem3.1}), this gives
$$|H|=|\pi_n(H)|=|\pi_n(\langle H_1,H_2\rangle)|=|\pi_n(\langle H_1^a,H_2^b\rangle)|\leq |\pi_n(G)|=|G|,$$
which, since $|H|=|G|$, implies that all quantities must be equal, and in particular $$|\pi_n(\langle H_1^a,H_2^b\rangle)|=|\pi_n(G)|.$$
Recalling that $G\cap K_n = \{\text{id}\}$ (thus $\langle H_1^a,H_2^b\rangle\cap K_n = \{\text{id}\}$), this gives $\langle H_1^a,H_2^b\rangle=G$, as desired.
\end{proof}
In the following lemma and throughout the rest of the article, for any subgroup $H\leq W_n$ of $W_n$, $C_{K_n}(H)$ denotes the group of elements of $K_n$ which commute with each element of $H$.
\begin{lemma}
\label{lem3.3}
For $n\geq 1$, let $H,G\leq W_n$ be two non-cyclic subgroups of $W_n$ such that $H\cap K_n = \{\text{id}\}$, $|H|=|G|$, and $H$ is elementwise $K_n$-conjugate into $G$. For two distinct maximal subgroups $H_1, H_2\trianglelefteq H$, suppose that $G=\langle H_1, H_2^a\rangle$ for some $a\in K_n$. Then $H$ is globally $K_n$-conjugate into $G$ if and only if $a\in C_{K_n}(H_1)C_{K_n}(x)$ for some $x\in H_2\setminus  H_1$.
\end{lemma}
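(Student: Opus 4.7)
The plan is to leverage the rigidity that Lemma~\ref{lem3.1} provides: since $G\cap K_n=\{\text{id}\}$, the projection $\pi_n$ is injective on $G$, so any subgroup of $G$ is uniquely determined by its image in $W_{n-1}$. In particular, two subgroups of $G$ with the same $\pi_n$-image must coincide. This dictionary will be used repeatedly in both directions.

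For the forward implication, assume $H$ is globally $K_n$-conjugate into $G$ via some $c\in K_n$, so that $H^c\le G$ and hence $H^c=G$ since $|H^c|=|H|=|G|$. Both $H_1$ and $H_1^c$ are subgroups of $G$ projecting onto $\pi_n(H_1)$, so by the rigidity above $H_1^c=H_1$. Applying the same idea pointwise---$chc^{-1}$ and $h$ both lie in $H_1\le G$ and share projection $\pi_n(h)$, hence coincide---will yield $c\in C_{K_n}(H_1)$. A parallel argument, comparing $H_2^a$ and $H_2^c$ as subgroups of $G$ with common projection $\pi_n(H_2)$, yields $a^{-1}c\in C_{K_n}(H_2)\subseteq C_{K_n}(x)$ for any $x\in H_2\setminus H_1$, and hence $a=c\cdot(a^{-1}c)^{-1}\in C_{K_n}(H_1)\,C_{K_n}(x)$.

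For the reverse implication, given $a=b_1 b_2$ with $b_1\in C_{K_n}(H_1)$ and $b_2\in C_{K_n}(x)$ for some $x\in H_2\setminus H_1$, I will set $c:=b_1$ and check $H^c=G$. The containment $H_1^c=H_1\le G$ is immediate. Using $c=b_1=ab_2^{-1}$, for $h\in H_2$ we have $chc^{-1}=ab_2^{-1}hb_2a^{-1}$, which equals $aha^{-1}\in H_2^a\le G$ \emph{provided} $b_2$ centralizes $h$. Thus the reverse direction reduces to upgrading $b_2\in C_{K_n}(x)$ to $b_2\in C_{K_n}(H_2)$, and I expect this to be the main obstacle. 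The key supporting fact is that $a$ itself centralizes $H_1\cap H_2$: for $h\in H_1\cap H_2$, both $h$ (via $H_1\le G$) and $h^a$ (via $H_2^a\le G$) lie in $G$ with common projection $\pi_n(h)$, hence coincide by the rigidity, so $a\in C_{K_n}(H_1\cap H_2)$. Combined with $b_1\in C_{K_n}(H_1)\subseteq C_{K_n}(H_1\cap H_2)$, this places $b_2=b_1^{-1}a$ in $C_{K_n}(H_1\cap H_2)$; together with $b_2\in C_{K_n}(x)$ and the decomposition $H_2=\langle H_1\cap H_2,\,x\rangle$ (valid because $H_1\cap H_2$ has index $2$ in $H_2$ by distinctness and maximality of $H_1,H_2$), this yields $b_2\in C_{K_n}(H_2)$, and thus $H^c=\langle H_1^c, H_2^c\rangle=\langle H_1, H_2^a\rangle=G$, as desired.
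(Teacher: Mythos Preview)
Your proof is correct and, like the paper's, rests on the rigidity that $\pi_n$ is injective on $G$ (via Lemma~\ref{lem3.1}); the forward implication is essentially identical to the paper's argument.

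In the reverse implication there is a small tactical difference worth noting. The paper first observes that $G=\langle H_1,x^a\rangle$ (since $\pi_n(\langle H_1,x^a\rangle)=\pi_n(H)=\pi_n(G)$ and $G\cap K_n=\{\text{id}\}$), and then computes directly
\[
H^{b_1}=\langle H_1^{b_1},x^{b_1}\rangle=\langle H_1,x^{ab_2}\rangle=\langle H_1,x^a\rangle=G,
\]
using only $b_2\in C_{K_n}(x)$ and the abelianness of $K_n$. You instead upgrade $b_2\in C_{K_n}(x)$ to $b_2\in C_{K_n}(H_2)$ by proving $a\in C_{K_n}(H_1\cap H_2)$ (a mild strengthening of Lemma~\ref{lem3.4}, by the same argument) and then using $H_2=\langle H_1\cap H_2,x\rangle$. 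Your detour is slightly longer but perfectly valid; the paper's shortcut simply avoids it by working with the single generator $x$ rather than all of $H_2$.
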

\begin{proof}
We first assume that $a\in C_{K_n}(H_1)C_{K_n}(x)$ for some $x\in H_2\setminus  H_1$. Write $a=bc$ for $b\in C_{K_n}(H_1)$, $c\in C_{K_n}(x)$. Since $H_1$ has index 2 in $H$, we have $H=\langle H_1,x\rangle$. Since $\pi_n(x^a)=\pi_n(x)$, this gives $\pi_n(H) = \pi_n(\langle H_1, x^a\rangle)$. Since $H\cap K_n = G\cap K_n = \{\text{id}\}$ (by Lemma \ref{lem3.1}), this immediately implies that $G=\langle H_1, x^a\rangle$. We have $$H^b = \langle H_1^b, x^b\rangle = \langle H_1, x^{ac}\rangle = \langle H_1, x^a\rangle = G,$$
where we used the fact that $K_n$ is an elementary abelian $2$-group in second and third equalities. This finishes the proof of one direction.\par

We now assume that $H$ is globally $K_n$-conjugate into $G$, i.e. there exists $b\in K_n$ such that $H^b = G$ (since $|H|=|G|$). Consider an arbitrary element $x\in H_2\setminus H_1$. Note that $H=\langle H_1, H_2\rangle$. Since $G\cap K_n = \{\text{id}\}$ by Lemma \ref{lem3.1}, we immediately get $H_1^b=H_1$ and $H_2^b=H_2^a$, which, since $H_1\cap K_n = H_2\cap K_n = \{\text{id}\}$ and $a,b\in K_n$, implies that $b\in C_{K_n}(H_1)$ and $ab\in C_{K_n}(H_2)$. This gives $a=b(ab)\in  C_{K_n}(H_1)C_{K_n}(H_2)\leq C_{K_n}(H_1)C_{K_n}(x)$, as desired.
\end{proof}
\begin{lemma}
\label{lem3.4}
	For $n\geq 1$, let $H,G\leq W_n$ be two non-cyclic subgroups of $W_n$ such that $H\cap K_n = \{\text{id}\}$, $|H|=|G|$, and $H$ is elementwise $K_n$-conjugate into $G$. For two distinct maximal subgroups $H_1, H_2\trianglelefteq H$, suppose that $G=\langle H_1, H_2^a\rangle$ for some $a\in K_n$. Then $a\in C_{K_n}(\Phi(H))$.
\end{lemma}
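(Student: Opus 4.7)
The plan is to exploit the elementary fact that the Frattini subgroup $\Phi(H)$ is contained in every maximal subgroup of $H$, combined with the conclusion of Lemma~\ref{lem3.1}. In particular, since $H_1,H_2$ are (distinct) maximal subgroups of the $2$-group $H$, we automatically have $\Phi(H)\leq H_1\cap H_2$, so every $\phi\in\Phi(H)$ lies simultaneously in $H_1$ and $H_2$.

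Now fix $\phi\in\Phi(H)$. Since $H_1\leq G$, the element $\phi$ lies in $G$. Since $H_2^a\leq G$, the conjugate $\phi^a$ also lies in $G$. Therefore the product $\phi(\phi^a)^{-1}$ lies in $G$. On the other hand, because $a\in K_n=\ker\pi_n$, we have $\pi_n(\phi^a)=\pi_n(\phi)$, so $\pi_n\bigl(\phi(\phi^a)^{-1}\bigr)=\mathrm{id}$, meaning $\phi(\phi^a)^{-1}\in K_n$. Hence $\phi(\phi^a)^{-1}\in G\cap K_n$.

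At this stage I invoke Lemma~\ref{lem3.1}, whose hypotheses are exactly what is assumed here, to conclude that $G\cap K_n=\{\mathrm{id}\}$. Therefore $\phi(\phi^a)^{-1}=\mathrm{id}$, i.e.\ $\phi^a=\phi$, which says that $a$ commutes with $\phi$. Since $\phi\in\Phi(H)$ was arbitrary and $a\in K_n$, this yields $a\in C_{K_n}(\Phi(H))$, as required.

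There is no substantial obstacle here; the whole argument is a short diagram chase. The only point that needs care is recognizing that the hypotheses of Lemma~\ref{lem3.1} transfer verbatim, so that $G\cap K_n=\{\mathrm{id}\}$ is available to kill the element $\phi(\phi^a)^{-1}\in G\cap K_n$. The use of the iterated-wreath-product structure is limited to the single fact (already recorded in Section~\ref{sec:sec3}) that conjugation by $K_n$ preserves the image under $\pi_n$.
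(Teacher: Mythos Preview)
Your proof is correct and is essentially the same as the paper's own argument: both observe that any $\phi\in\Phi(H)\leq H_1\cap H_2$ and its conjugate $\phi^a$ lie in $G$, have the same image under $\pi_n$, and hence coincide because $G\cap K_n=\{\mathrm{id}\}$ by Lemma~\ref{lem3.1}. The only difference is cosmetic---you write out the element $\phi(\phi^a)^{-1}\in G\cap K_n$ explicitly, whereas the paper concludes $\phi=\phi^a$ directly.
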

\begin{proof}
Take $\alpha\in \Phi(H)$. Since $\alpha\in H_1\cap H_2$, we must have $\alpha, \alpha^a\in G$. Note that $\pi_n(\alpha) = \pi_n(\alpha^a)$. Since $G\cap K_n = \{\text{id}\}$ by Lemma \ref{lem3.1}, this immediately gives $\alpha = \alpha^a$, as desired.
\end{proof}
\begin{lemma}
\label{lem3.5}
Let $n\geq 1$. For $a\in K_n$, $x\in W_n$, write $a=(u,1)$, $x=(v,s)$ for some $u,v\in \mathbb{F}_2^{2^{n-1}}$, $s\in W_{n-1}$. Then $a\in C_{K_n}(x)\iff u\in \text{Fix}(s)$.
\end{lemma}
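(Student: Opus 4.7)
The plan is to unpack the semidirect-product multiplication law from equation (1) and simply compare the two components of $ax$ and $xa$. This is a direct calculation with essentially no hidden content: since the statement is biconditional and both sides are transparent in the coordinate description $(v,s)$ of $W_n = \mathbb{F}_2^{2^{n-1}} \rtimes W_{n-1}$, there is no real obstacle.

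Concretely, I first write out $ax$ using (1): with $a = (u,1)$ and $x = (v,s)$, we have $ax = (u + 1\cdot v,\; 1\cdot s) = (u+v,\, s)$, where the action of the identity of $W_{n-1}$ on $\mathbb{F}_2^{2^{n-1}}$ is trivial. Next I compute $xa = (v + s(u),\; s\cdot 1) = (v + s(u),\, s)$. Thus the $W_{n-1}$-components of $ax$ and $xa$ automatically agree, and $a$ and $x$ commute if and only if the $\mathbb{F}_2^{2^{n-1}}$-components agree, i.e. $u + v = v + s(u)$. Since $\mathbb{F}_2^{2^{n-1}}$ is an $\mathbb{F}_2$-vector space, this collapses to $u = s(u)$.

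Finally, by Definition~\ref{def2.1}, the condition $s(u)=u$ is precisely $u \in \text{Fix}(s)$ (noting that $s \in W_{n-1}$ acts on $\mathbb{F}_2^{2^{n-1}}$, so $\text{Fix}(s)$ is the appropriate subset here). This establishes the equivalence $a \in C_{K_n}(x) \iff u \in \text{Fix}(s)$ and completes the proof. The only point worth verifying carefully is the convention that the identity element of $W_{n-1}$ acts as the identity on coordinate vectors, which is immediate from the definition of the permutation action given between (eq 1) and the lemma.
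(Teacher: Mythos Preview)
Your proof is correct and essentially identical to the paper's: the paper rewrites $x^a = x$ as $(u,1)(v,s)(u,1)=(v,s)$ and expands via (\ref{eq1}) to get $u+v+s(u)=v$, hence $s(u)=u$, which is the same computation as your $ax=xa$ (since $a^{-1}=a$ in $K_n$).
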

\begin{proof}
If we rewrite the equality $x^a = x$ using (\ref{eq1}), we obtain
$$(u,1)(v,s)(u,1) = (v,s) \iff (u+v+s(u),s) = (v,s).$$
This immediately gives $s(u) = u$, as desired.
\end{proof}
\begin{lemma}
\label{lemma_late2}
For $n\geq 1$, let $H\leq W_n$ be a subgroup of $W_n$. Suppose that $H\cap K_n = \{\text{id}\}$. Then $\pi_n(\Phi(H))=\Phi(\pi_n(H))$.
\end{lemma}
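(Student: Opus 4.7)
The plan is to observe that the hypothesis $H \cap K_n = \{\mathrm{id}\}$ forces the restriction of $\pi_n$ to $H$ to be an isomorphism, and then to push the Frattini subgroup across this isomorphism using the explicit description (\ref{eq3}).

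In more detail, I would argue as follows. Since $K_n = \mathrm{Ker}(\pi_n)$, the condition $H \cap K_n = \{\mathrm{id}\}$ says precisely that $\pi_n|_H : H \to \pi_n(H)$ is injective; being surjective onto its image by definition, it is an isomorphism of groups. Both $H$ and $\pi_n(H)$ sit inside $2$-groups ($W_n$ and $W_{n-1}$, respectively), so they are themselves $2$-groups, and (\ref{eq3}) applies: $\Phi(H) = H^2[H,H]$ and $\Phi(\pi_n(H)) = \pi_n(H)^2[\pi_n(H),\pi_n(H)]$.

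Any group homomorphism sends squares to squares and commutators to commutators, so $\pi_n(H^2[H,H]) = \pi_n(H)^2[\pi_n(H),\pi_n(H)]$, which already gives the inclusion $\pi_n(\Phi(H)) \subseteq \Phi(\pi_n(H))$. For the reverse inclusion one uses that $\pi_n|_H$ is an isomorphism: every square $\pi_n(h)^2 = \pi_n(h^2)$ and every commutator $[\pi_n(h_1),\pi_n(h_2)] = \pi_n([h_1,h_2])$ in $\pi_n(H)$ is the $\pi_n$-image of an element of $H^2[H,H] = \Phi(H)$, so $\Phi(\pi_n(H)) \subseteq \pi_n(\Phi(H))$. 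Combining the two inclusions yields the claimed equality.

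There is no real obstacle here: the only content is the observation that $\pi_n|_H$ is an isomorphism, after which the statement is the standard fact that the Frattini subgroup is preserved under isomorphism, applied to the characterization (\ref{eq3}) valid in a $2$-group. The same proof would work for any prime $p$ replacing $2$ and any normal subgroup of $W_n$ replacing $K_n$, provided one stays within the class of $p$-groups so that (\ref{eq3}) is available.
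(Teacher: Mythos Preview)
Your proof is correct. You observe directly that $\pi_n|_H$ is an isomorphism and then push $\Phi$ across it via the description $\Phi(X)=X^2[X,X]$ from (\ref{eq3}). The paper instead argues more indirectly: it invokes Burnside's Basis Theorem to deduce $|\Phi(H)|=|\Phi(\pi_n(H))|$, notes the general containment $\pi_n(\Phi(H))\leq\Phi(\pi_n(H))$ valid for any surjection, and concludes by comparing cardinalities. Your route is shorter and more transparent; in fact you do not even need (\ref{eq3}), since any group isomorphism carries maximal subgroups to maximal subgroups and hence preserves the Frattini subgroup outright. The paper's argument, on the other hand, isolates the surjection-level inclusion $\pi_n(\Phi(H))\leq\Phi(\pi_n(H))$ as a standalone step, which is a fact that holds without the injectivity hypothesis.
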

\begin{proof}
For any finite group $G$, let $m(G)$ denote the size of the minimal set of generators of $G$. Note that $m(H)=m(\pi_n(H))$ because $H$ and $\pi_n(H)$ are isomorphic by the assumption $H\cap K_n=\{\text{id}\}$.

For any finite 2-group $G$, by Burnside's Basis Theorem, we have 
\[|G/\Phi(G)|=\frac{|G|}{|\Phi(G)|}=2^{m(G)}.\]
Therefore, from above, we have $\frac{|H|}{|\Phi(H)|} = \frac{|\pi_n(H)|}{|\Phi(\pi_n(H))|}$. Since $|H|=|\pi_n(H)|$ by the assumption $H\cap K_n = \{\text{id}\}$, this gives
\begin{equation}
\label{eq_late1}
|\Phi(\pi_n(H))| = |\Phi(H)| = |\pi_n(\Phi(H))|,
\end{equation}
where we used the fact that $\Phi(H)\cap K_n = \{\text{id}\}$ in the last equality. On the other hand, considering the surjection $\pi_n:H \twoheadrightarrow\pi_n(H)$, it is a well-known (and easy) exercise to show that
\begin{equation}
\label{eq_late2}
\pi_n(\Phi(H))\leq \Phi(\pi_n(H)).
\end{equation}
We can now combine (\ref{eq_late1}) and (\ref{eq_late2}) to conclude that $\pi_n(\Phi(H))=\Phi(\pi_n(H))$, as desired.
\end{proof}
\begin{lemma}
\label{lem3.6}
	Let $x,y\in W_n$, $a,b\in K_n$. Suppose that $x,y,a,b$ satisfy the equality $$xy^a=(xy)^b.$$
	Setting $x=(v,s)$, $y=(w,t)$, $a=(u,1)$ and $b=(z,1)$, we have
	$$s(u)+st(u) = z+st(z).$$
\end{lemma}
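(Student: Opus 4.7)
The plan is a direct unfolding of both sides of $xy^a=(xy)^b$ using the semidirect product multiplication rule (\ref{eq1}), followed by a coordinate-by-coordinate comparison. Since $K_n$ is elementary abelian of exponent $2$, both $a$ and $b$ are self-inverse, so each conjugation $g^c$ with $c\in K_n$ collapses to the triple product $c\,g\,c$, which can be evaluated by two applications of (\ref{eq1}). This is exactly the same style of calculation already used in Lemma \ref{lem3.5}.

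For the left-hand side, I first compute $y^a=(u,1)(w,t)(u,1)$: the second coordinate of the result is $t$, and two successive applications of (\ref{eq1}) place $u+w+t(u)$ in the first coordinate. Left-multiplying by $x=(v,s)$ then propagates $s$ across this vector, yielding $xy^a=(v+s(u)+s(w)+st(u),\,st)$. For the right-hand side, I would first form $xy=(v+s(w),st)$ in one step and then conjugate by $b=(z,1)$ to produce $(xy)^b=(v+s(w)+z+st(z),\,st)$.

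At this point both sides have second coordinate $st$ automatically, as they must, since conjugating by an element of $K_n=\ker\pi_n$ cannot change the second coordinate. So the entire content of the hypothesis lies in the first coordinate. Cancelling the common terms $v$ and $s(w)$ from the resulting equality in $\mathbb{F}_2^{2^{n-1}}$ immediately gives $s(u)+st(u)=z+st(z)$. I do not foresee any real obstacle here; the lemma is a bookkeeping identity for the iterated wreath product, presumably isolated because this particular relation among $s,t,u,z$ will be invoked repeatedly in the proof of Theorem \ref{thm1.4}.
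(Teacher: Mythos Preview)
Your proposal is correct and follows essentially the same approach as the paper: both sides are expanded via (\ref{eq1}), yielding $xy^a=(v+s(u)+s(w)+st(u),st)$ and $(xy)^b=(z+v+s(w)+st(z),st)$, and comparing first coordinates gives the result.
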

\begin{proof}
	By direct computation using (\ref{eq1}), we get
	\begin{align*}
	xy^a &= (v,s)(u+w+t(u),t)\\
	&= (v+s(u)+s(w)+st(u),st).
	\end{align*}
	Similarly, we have
	$$(xy)^b = (z+v+s(w)+st(z),st)$$
	If we set the expressions for $xy^a$ and $(xy)^b$ equal to each other, we immediately get
	$$s(u)+st(u) = z+st(z),$$
	as desired.
\end{proof}
\section{Proof of Theorem 1.4}
\label{sec:sec5}
We will start with the following proposition, which will be the key ingredient in the inductive step in the proof of Theorem \ref{thm1.4}.
\begin{prop}
\label{prop4.1}
Let $X\leq W_n$ be a subgroup of $W_n$. Take $\alpha\in W_n\setminus X$ and also set $Y=\langle X,\alpha\rangle\leq W_n$. Suppose that a vector $v=(v_1,\dots,v_{2^n})\in \mathbb{F}_2^{2^n}$ satisfies the following three properties:
\begin{enumerate}[label=(\alph*)]
	\item $v_i + v_{\alpha(i)} = v_{\beta(i)} + v_{\beta\alpha(i)}$ for any $\beta\in X$ and $i\in \{1,\dots,2^n\}$.
	\item $v_i = v_j$ if $\text{Orb}_\alpha(i) = \text{Orb}_\alpha(j)$ and $\text{Orb}_{\beta}(i) = \text{Orb}_{\beta}(j)$ for some $\beta\in X$ and $i,j\in \{1,\dots,2^n\}$.
	\item $v\in \text{Fix}(\Phi(Y))$.
\end{enumerate}
Then $v\in \text{Fix}(\alpha)+\text{Fix}(X)$.
\end{prop}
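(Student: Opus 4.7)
Plan: work one $Y$-orbit at a time. Since both $\mathrm{Fix}(\alpha)$ and $\mathrm{Fix}(X)$ respect the partition of $\{1,\dots,2^n\}$ into $Y$-orbits, I may assume $Y$ acts transitively on $O:=\{1,\dots,2^n\}$ and produce $u\in\mathrm{Fix}(\alpha)$, $w\in\mathrm{Fix}(X)$ with $v=u+w$ on $O$.

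Enumerate the $\alpha$-orbits in $O$ as $A_1,\dots,A_p$ and the $X$-orbits as $B_1,\dots,B_q$, and form the bipartite graph $\Gamma$ with these as vertex classes, joining $A_i$ to $B_j$ whenever $A_i\cap B_j\neq\emptyset$. Condition (b) says that $v$ is constant on each intersection $A_i\cap B_j$, so it descends to an edge-function $\tilde v\colon E(\Gamma)\to\mathbb{F}_2$ via $\tilde v(A_i,B_j):=v_k$ for any $k\in A_i\cap B_j$. Producing $u$ and $w$ is then equivalent to producing vertex labels $U\colon\{A_i\}\to\mathbb{F}_2$, $W\colon\{B_j\}\to\mathbb{F}_2$ with $U(A_i)+W(B_j)=\tilde v(A_i,B_j)$ on every edge; since $\Gamma$ is connected (by transitivity of $Y$), such labels exist if and only if $\tilde v$ sums to $0$ around every cycle of $\Gamma$. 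Concretely, I would fix a spanning tree $T$, set $U(A_1)=0$, propagate $U,W$ along $T$, and check cycle consistency at each non-tree edge.

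To establish the cycle conditions I would exploit (a) and (c) together. Condition (a), rewritten using (b), yields the four-term identity
\[
\tilde v(A(i),B(i))+\tilde v(A(i),B(\alpha(i)))+\tilde v(A(\beta(i)),B(i))+\tilde v(A(\beta\alpha(i)),B(\alpha(i)))=0
\]
for every $\beta\in X$ and $i\in O$, where $A(j),B(j)$ denote the $\alpha$-orbit and $X$-orbit of $j$. When $A(\beta(i))=A(\beta\alpha(i))$ (for instance when $\beta$ normalizes $\langle\alpha\rangle$), this is exactly the $4$-cycle condition around the rectangle $A(i)$--$B(i)$--$A(\beta(i))$--$B(\alpha(i))$. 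In the non-normalizing case the four edges need not form a cycle of $\Gamma$, and this is where (c) enters: $v$ being $\Phi(Y)$-invariant with $\Phi(Y)\supseteq[Y,Y]$ forces $\tilde v$ to be constant on $\Phi(Y)$-orbits of edges, and the commutators $[\alpha,\beta]\in\Phi(Y)$ supply precisely the identifications that convert the four-term identity into a genuine cycle relation of $\Gamma$.

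The main obstacle is the final combinatorial packaging: showing that the relations supplied by (a) and (c) together span the entire cycle space of $\Gamma$, not merely the rectangular $4$-cycles. I would address this by descending to the quotient by $\Phi(Y)$. Burnside's basis theorem, together with $\alpha\notin X\Phi(Y)$ (which holds because $\alpha\notin X$ forces $X\neq Y$, and if $\alpha\in X\Phi(Y)$ then $Y=X\Phi(Y)$, forcing $X=Y$), shows that $\bar Y:=Y/\Phi(Y)$ is elementary abelian and splits as $\bar X\oplus\langle\bar\alpha\rangle$. On the quotient graph $\Gamma/\Phi(Y)$ the rectangular $4$-cycles from (a) do span the full cycle space in this abelianized setting, and pulling back along the $\Phi(Y)$-identifications furnished by (c) should recover the cycle condition on $\Gamma$, thereby completing the proof.
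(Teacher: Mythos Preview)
Your bipartite-graph reduction is correct and elegant: condition (b) does make the edge function $\tilde v$ well-defined (if $i,j$ lie in the same $\alpha$-orbit and the same $X$-orbit, pick $\beta\in X$ with $\beta(i)=j$; then $\Orb_\beta(i)=\Orb_\beta(j)$ and (b) applies), and the existence of vertex labels $U,W$ is indeed equivalent to the vanishing of $\tilde v$ around every cycle of $\Gamma$. The four-term relation you extract from (a) is also correct, as is the observation that (c) lets you trade $v_{\beta\alpha(i)}$ for $v_{\alpha\beta(i)}$ via the commutator $[\alpha,\beta]\in\Phi(Y)$.

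The gap is the final ``descent to $Y/\Phi(Y)$.'' You never say what the quotient graph $\Gamma/\Phi(Y)$ is, and the natural candidate---the bipartite graph built from $\bar\alpha$-orbits and $\bar X$-orbits on $O/\Phi(Y)$---is \emph{not} a quotient of $\Gamma$: a $\bar\alpha$-orbit on $O/\Phi(Y)$ pulls back to a $\langle\alpha,\Phi(Y)\rangle$-orbit on $O$, which is generally a union of several $\alpha$-orbits, so the vertex sets do not match up. Condition (b), which concerns $\langle\beta\rangle$-orbits for \emph{individual} $\beta\in X$ rather than $X$-orbits, also does not descend to the quotient in any obvious way. And even granting a clean quotient picture, the claim that the rectangular $4$-cycles coming from (a) span the full cycle space in the abelianized setting is asserted, not proved; for non-free actions this needs an argument. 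So the proposal stops exactly at the hard step.

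The paper avoids the cycle-spanning question by working constructively rather than by obstruction. Instead of searching for an arbitrary $u\in\Fix(\alpha)$, it restricts to a structured subspace $V_Y\subset\Fix(\alpha)$: vectors constant on each set $(A_t\cup\alpha A_t)\cdot i_j$, where the $A_t$ are the $\Phi(Y)$-cosets contained in $X$ and the $i_j$ are $Y$-orbit representatives. This parametrizes the candidate $v'$ by free scalars $a_{i_j}^{(t)}$, and the condition $v-v'\in\Fix(X)$ becomes a family of equations $a_{i_j}^{(t)}+a_{i_j}^{(t')}=v_{\sigma_1(i_j)}+v_{\sigma_2(i_j)}$. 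The only possible inconsistency is when two parameters are forced to coincide because the underlying sets overlap; the paper reduces this to four concrete overlap patterns and dispatches each with a one-line appeal to (a), (b), or (c). Your graph-theoretic viewpoint is more conceptual, but the paper's $\Phi(Y)$-coset parametrization is precisely what converts the global cycle condition into a finite, checkable case analysis.
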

\begin{proof}
We start by noting that, since $v\in \text{Fix}(\Phi(Y))$, throughout the proof, we will free to use $v_{\sigma(i)}$ instead of $v_{\sigma^{-1}(i)}$ for any $\sigma\in Y$ because $\sigma(\sigma^{-1})^{-1}=\sigma^2\in \Phi(Y)$ by (\ref{eq3}).

If $Y$ is cyclic, then we must have $Y=\langle \alpha\rangle$ since $\alpha\not\in X$. In this case, since $X\leq \langle \alpha\rangle$, condition (b) becomes \enquote{$v_i = v_j$ if $\text{Orb}_{\alpha}(i) = \text{Orb}_{\alpha}(j)$} and we also get $\text{Fix}(\alpha)+\text{Fix}(X) = \text{Fix}(X)$. Since any $v$ satisfying (b) clearly lies in $\text{Fix}(\alpha)\subseteq\text{Fix}(X)$, we are done.\par

We can now assume that $Y$ is non-cyclic. Take a vector $v\in \mathbb{F}_2^{2^n}$ that satisfies all three conditions. Consider an arbitrary $i\in \{1,\dots,2^n\}$, and its orbit $O(i):=O_{Y}(i)$ under the action of $Y$. By definition, for any $j,j'\in O(i)$, there exists $\sigma,\sigma'\in Y$ such that $j=\sigma(i)$, $j'=\sigma'(i)$. For any $y\in Y$, let $\bar{y}$ denote the image of $y$ under the quotient map $Y\twoheadrightarrow Y/\Phi(Y)$. Since $v$ satisfies condition (c), $v_j = v_{j'}$ if $\bar{\sigma} = \bar{\sigma'}$. Suppose that the group $Y$ is $\ell+1$-generated for some $\ell\geq 1$. Since $\alpha\not\in X$, we can write $Y=\langle \beta_1,\dots,\beta_{\ell},\alpha\rangle$ for some $\beta_1,\dots,\beta_{\ell}\in X$ with $X=\langle \beta_1,\dots,\beta_{\ell}\rangle$. By Burnside's Basis Theorem, $Y$ is a union of $2^{\ell+1}$ distinct cosets of $\Phi(Y)$. More concretely, we can write $Y$ as a disjoint union
$$Y = (\bigcup\limits_{i=1}^{2^{\ell}} A_i)\cup (\bigcup\limits_{i=1}^{2^{\ell}} \alpha A_i),$$
where each $A_i$ (hence $\alpha A_i$) is a coset of $\Phi(Y)$, and $X=\bigcup\limits_{i=1}^{2^{\ell}} A_i$.
Fix the representatives $i_1,\dots,i_k$ of distinct orbits of $\{1,\dots,2^n\}$ under the action of $Y$, i.e.
$$\bigcup\limits_{j=1}^{k} O(i_j) = \{1,\dots,2^n\}.$$
\begin{defin}
\label{def4.2}
Let $v\in \mathbb{F}_2^{2^n}$. We say \emph{$\mathcal{P}_Y(v)$ holds} if $v$ satisfies the following condition: 
\begin{itemize}
\item For any $\sigma,\sigma'\in Y$ and $j\in\{1,\dots,k\}$, $v_{\sigma(i_j)}=v_{\sigma'(i_j)}=a_{i_j}^{(t)}$ for some $a_{i_j}^{(t)}\in \{0,1\}$ if $\sigma\in A_t$, $\sigma'\in \alpha A_t$ for some $1\leq t\leq 2^{\ell}$.
\end{itemize}

\end{defin}
We define the subspace $V_Y\subseteq \mathbb{F}_2^{2^n}$ by
\begin{equation}
\label{eq4}
V_Y = \{u\in \mathbb{F}_2^{2^n}|\text{ }\mathcal{P}_Y(u)\text{ holds}.\}.
\end{equation}
The proof of Proposition \ref{prop4.1} will now follow from the next claim.
\begin{claim}
\label{claim4.3}
There exists $v'\in V_Y$ such that $v'\in \text{Fix}(\alpha)$ and $v-v'\in \text{Fix}(X)$.
\end{claim}
\begin{proof}[Proof of Claim \ref{claim4.3}]
Any vector in $V_Y$ already lies in $\text{Fix}(\alpha)$ by definition. Thus, to finish the proof, we need to find $v'\in V_Y$ such that $v-v'\in \text{Fix}(X)$.

For any $\sigma_1,\sigma_2\in Y$, we have
\[\sigma_1(\sigma_2)^{-1}\in \Phi(Y) \iff \sigma_1,\sigma_2\in A_t \text{ or }\sigma_1,\sigma_2\in \alpha A_t\text{ for some }t\in\{1,\dots,2^{\ell}\}.\]
Therefore, by definition, any $v'\in V_Y$ lies in $\text{Fix}(\Phi(Y))$. Since $v\in \text{Fix}(\Phi(Y))$ by assumption, we conclude that $v-v'\in \text{Fix}(\Phi(Y)).$

Also note that, since $X=\bigcup\limits_{i=1}^{2^{\ell}} A_i$, for any $\sigma_1,\sigma_2\in Y$, we have $$\sigma_1{\sigma_2}^{-1}\in X \iff \sigma_1\in A_t, \sigma_2\in A_{t'} \text{ or } \sigma_1\in \alpha A_t, \sigma_2\in \alpha A_{t'} \text{ for some }t,t'\in \{1,\dots,2^{\ell}\}.$$ Thus, it follows that for any $v'\in V_Y$, we have $v-v'\in \text{Fix}(X)$ if and only if the following two conditions are satisfied for $j=1,\dots,k$:
\begin{enumerate}
	\item  $(v-v')_{\sigma_1(i_j)}=(v-v')_{\sigma_2(i_j)}$ if $\sigma_1\in A_i, \sigma_2\in A_{i'}$ for some $i,i'\in \{1,\dots,2^{\ell}\}$.
	\item  $(v-v')_{\sigma_3(i_j)}=(v-v')_{\sigma_4(i_j)}$ if $\sigma_3\in \alpha A_i, \sigma_4\in \alpha A_{i'}$ for some $i,i'\in \{1,\dots,2^{\ell}\}$.
\end{enumerate}
We will now show that a vector $v'\in V_Y$ that satisfies both conditions indeed exists.

\textbf{Condition (1)}: With the notation in Definition \ref{def4.2}, condition (1) is equivalent to the equality
\begin{equation}
\label{eq5}
v_{\sigma_1(i_j)}+v_{\sigma_2(i_j)}=a_{i_j}^{(i)}+a_{i_j}^{(i')}
\end{equation}
for some $a_{i_j}^{(i)},a_{i_j}^{(i')}\in \{0,1\}$. Note that if the variables $a_{i_j}^{(i)},a_{i_j}^{(i')}\in \{0,1\}$ are independent, then we can choose them so that (\ref{eq5}) holds. If they are not independent, this gives us four different cases:
\begin{enumerate}
\item $\pi_1(i_j)=\pi_2(i_j)$ for some $\pi_1\in A_i,\pi_2\in A_{i'}$. In this case, $a_{i_j}^{(i)}+a_{i_j}^{(i')}=0$ by Definition \ref{def4.2}. Recalling that $v\in \text{Fix}(\Phi(Y))$, the left-hand side of (\ref{eq5}) also vanishes since $v_{\sigma_1(i_j)}=v_{\pi_1(i_j)}=v_{\pi_2(i_j)}=v_{\sigma_2(i_j)}$, where we used Definition~\ref{def4.2} in the first and the third equalities. Hence, (\ref{eq5}) holds.
\item $\pi_1(i_j)=\alpha \pi_2(i_j)$ for some $\pi_1\in A_i,\pi_2\in A_{i'}$. In this case, we have $\text{Orb}_{\alpha}(\pi_1(i_j))=\text{Orb}_{\alpha}(\pi_2(i_j))$ and $\text{Orb}_{\pi_1(\pi_2)^{-1}}(\pi_1(i_j))=\text{Orb}_{\pi_1(\pi_2)^{-1}}(\pi_2(i_j))$. Thus, since $\pi_1(\pi_2)^{-1}\in X$, condition (2) of Proposition \ref{prop4.1} implies that $v_{\pi_1(i_j)}=v_{\pi_2(i_j)}$. Recalling that $v\in \text{Fix}(\Phi(Y))$, and that $\pi_1\sigma_1^{-1},\pi_2\sigma_2^{-1}\in \Phi(Y)$, this immediately gives $v_{\sigma_1(i_j)}=v_{\sigma_2(i_j)}$ since $v_{\sigma_1(i_j)}=v_{\pi_1(i_j)}$ and $v_{\sigma_2(i_j)}=v_{\pi_2(i_j)}$. Therefore, to show that  (\ref{eq5}) holds, it remains to see that $a_{i_j}^{(i)}+a_{i_j}^{(i')}$ vanishes. But, this is immediate by Definition \ref{def4.2} and the condition $\pi_1(i_j)=\alpha \pi_2(i_j)$, hence we are done.
\item $\alpha\pi_1(i_j)=\alpha\pi_2(i_j)$ for some $\pi_1\in A_i,\pi_2\in A_{i'}$. Applying $\alpha^{-1}$ to both sides of $\alpha\pi_1(i_j)=\alpha\pi_2(i_j)$, we obtain case (1), hence the result follows from case (1).
\item $\alpha\pi_1(i_j)=\pi_2(i_j)$ for some $\pi_1\in A_i,\pi_2\in A_{i'}$. Applying $\alpha^{-1}$ to both sides of $\alpha\pi_1(i_j)=\pi_2(i_j)$, we obtain $\pi_1(i_j)=\alpha^{-1}\pi_2(i_j)$. By definition of $A_{i'}$, since $\alpha(\alpha^{-1})^{-1}=\alpha^2\in \Phi(Y)$ by (\ref{eq3}), there exists $\pi_3\in A_{i'}$ such that $\alpha^{-1}\pi_2(i_j)=\alpha\pi_3(i_j)$. This yields to $\pi_1(i_j) = \alpha\pi_3(i_j)$, and the result now follows from case (2).
\end{enumerate}

\textbf{Condition (2):} This condition is clearly equivalent to the condition
\begin{equation}
\label{eq6}
(v-v')_{\alpha\sigma_1(i_j)} = (v-v')_{\alpha\sigma_2(i_j)} \text{ if }\sigma_1\in A_i, \sigma_2\in A_{i'}\text{ for some }i,i'\in1,\{\dots,2^{\ell}\}.
\end{equation}
Applying $\alpha$ to both sides of the equality in (\ref{eq6}), we obtain
$$(v-v')_{\sigma_1(i_j)} = (v-v')_{\sigma_2(i_j)},$$
which is same as condition (1). Hence, the proof follows from the first part. We have covered all possible cases, hence we are done.
\end{proof}
The proof of Proposition \ref{prop4.1} now follows from Claim \ref{claim4.3}.
\end{proof}
The following will be crucial in showing that certain vectors that will appear in the proof of Theorem \ref{thm1.4} do indeed satisfy the conditions given in Proposition \ref{prop4.1}, hence the conclusion of Proposition \ref{prop4.1}. 
\begin{prop}
\label{prop4.4}
Let $X\leq W_n$ be a subgroup of $W_n$. Take $\alpha\in W_n\setminus X$ and also set $Y=\langle X,\alpha\rangle\leq W_n$. Suppose that a vector $v\in \mathbb{F}_2^{2^n}$ satisfies the following conditions:
\begin{enumerate}
\item For any $\beta\in X$, there exists $u^{(\beta)}\in \text{Fix}(\Phi(Y))$ such that $\alpha(v)+\alpha\beta(v)=u^{(\beta)}+\alpha\beta(u^{(\beta)})$.
\item $v\in\text{Fix}(\Phi(Y))$.
 
\end{enumerate}
Then we have $v\in\text{Fix}(\alpha)+\text{Fix}(X)$.
\end{prop}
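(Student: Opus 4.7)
The strategy is to deduce Proposition~\ref{prop4.4} from Proposition~\ref{prop4.1} by verifying hypotheses (a), (b), (c) of the latter for the vector $v$. Condition (c) is precisely hypothesis (2). The key simplification used throughout is that since $v$ and each $u^{(\beta)}$ lie in $\text{Fix}(\Phi(Y))$, the $Y$-action on them factors through the elementary abelian $2$-group $\bar{Y}=Y/\Phi(Y)$. In particular, $(\alpha\beta)^{2}$ acts trivially on $v$ and $u^{(\beta)}$, and $\alpha\beta\alpha$ acts as $\beta$ on both, since $\overline{\alpha\beta\alpha}=\bar{\alpha}^{2}\bar{\beta}=\bar{\beta}$ in $\bar{Y}$.

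For (a), the plan is to apply the operator $I+\alpha\beta$ to both sides of the identity in hypothesis (1). The right-hand side collapses to $u^{(\beta)}+(\alpha\beta)^{2}(u^{(\beta)})=0$; expanding the left-hand side and substituting $(\alpha\beta)^{2}(v)=v$ and $\alpha\beta\alpha(v)=\beta(v)$ produces the vector identity $v+\alpha(v)+\beta(v)+\alpha\beta(v)=0$. Reading this coordinate-wise, and replacing each $v_{\sigma^{-1}(i)}$ by $v_{\sigma(i)}$ (valid because $\sigma^{2}\in\Phi(Y)$), recovers condition (a) of Proposition~\ref{prop4.1}.

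For (b), suppose $j=\alpha^{a}(i)=\beta^{b}(i)$ for some $\beta\in X$. If $a$ or $b$ is even, Frattini-invariance of $v$ immediately forces $v_{i}=v_{j}$. The only substantive case --- and the main technical obstacle --- is when both $a$ and $b$ are odd. In that case $\alpha^{a}\beta^{-b}\in\text{Stab}_{Y}(i)$ reduces modulo $\Phi(Y)$ to $\bar{\alpha}\bar{\beta}$, so $\alpha\beta\equiv\alpha^{a}\beta^{-b}\pmod{\Phi(Y)}$, and hence $v_{\alpha\beta(i)}=v_{\alpha^{a}\beta^{-b}(i)}=v_{i}$ and likewise $u^{(\beta)}_{\alpha\beta(i)}=u^{(\beta)}_{i}$. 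Reading hypothesis (1) at coordinate $i$ (after the usual Frattini conversions) gives $v_{\alpha(i)}+v_{\alpha\beta(i)}=u^{(\beta)}_{i}+u^{(\beta)}_{\alpha\beta(i)}$; substituting the equalities above makes the right-hand side vanish, forcing $v_{\alpha(i)}=v_{i}$, and since $a$ is odd this gives $v_{j}=v_{\alpha(i)}=v_{i}$.

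With (a), (b), (c) all established, Proposition~\ref{prop4.1} applies directly and concludes that $v\in\text{Fix}(\alpha)+\text{Fix}(X)$. The hardest part is the odd-odd case of (b), where hypothesis (1) must be exploited at an individual coordinate rather than as a global vector identity; the trick is to use the stabilizer relation $\alpha^{a}\beta^{-b}(i)=i$ to collapse $\alpha\beta$ to the identity modulo $\Phi(Y)$ when evaluating at $i$, which is what makes the $u^{(\beta)}$-contributions cancel in pairs.
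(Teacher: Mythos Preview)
Your argument is correct and follows essentially the same route as the paper: verify hypotheses (a)--(c) of Proposition~\ref{prop4.1}, obtaining (a) by applying $I+\alpha\beta$ to hypothesis (1) (the paper phrases this as ``apply $\alpha\beta$ and add'') and handling the odd--odd case of (b) by collapsing $\alpha\beta$ modulo $\Phi(Y)$ at the coordinate $i$. One small slip: from $\alpha^{a}(i)=\beta^{b}(i)=j$ the element that stabilizes $i$ is $\beta^{-b}\alpha^{a}$ (or $\alpha^{-a}\beta^{b}$), not $\alpha^{a}\beta^{-b}$; however, since any such element still lies in the coset $\alpha\beta\,\Phi(Y)$, your conclusions $v_{\alpha\beta(i)}=v_{i}$ and $u^{(\beta)}_{\alpha\beta(i)}=u^{(\beta)}_{i}$ remain valid and the rest of the argument goes through unchanged.
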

\begin{proof}
To prove the proposition, it suffices to show that the vector $v$ satisfies the conditions (a) and (b) in Proposition \ref{prop4.1}. Note that we have the equality
\begin{equation}
\label{eq7}
\alpha(v)+\alpha\beta(v)=u^{(\beta)}+\alpha\beta(u^{(\beta)}).
\end{equation}
If we apply $\alpha\beta$ to both sides of (\ref{eq7}), since $v,u^{(\beta)}\in \text{Fix}(\Phi(Y))$ and $(\alpha\beta)^2,(\alpha\beta\alpha)(\beta)^{-1}\in \Phi(Y)$, we obtain
\begin{equation}
\label{eq8}
v+\beta(v) = u^{(\beta)}+\alpha\beta(u^{(\beta)}).
\end{equation}
Adding (\ref{eq7}) and (\ref{eq8}) yields to
\begin{equation}
\label{eq9}
v+\beta(v) = \alpha(v)+\alpha\beta(v)
\end{equation}
Comparing the coordinates of both sides in (\ref{eq9}), we obtain
\[v_i + v_{\beta^{-1}(i)} = v_{\alpha^{-1}(i)} + v_{(\alpha\beta)^{-1}(i)}\]
for any $i\in \{1,\dots,2^n\}$. Recalling that $v\in \text{Fix}(\Phi(Y))$, this can be rewritten as
\[v_i + v_{\beta(i)} = v_{\alpha(i)} + v_{\alpha\beta(i)}\]
for any $i\in \{1,\dots,2^n\}$.
Since this can be done for any $\beta\in X$, it follows that $v$ satisfies the condition (a) of Proposition \ref{prop4.1}.

It now remains to show that the vector $v$ satisfies condition (b) of Proposition~\ref{prop4.1}. To that end, suppose that for some $\beta\in X$, $\text{Orb}_{\alpha}(i)=\text{Orb}_{\alpha}(j)$ and $\text{Orb}_{\beta}(i)=\text{Orb}_{\beta}(j)$ for some $i,j\in \{1,\dots, 2^n\}$. Write $\alpha^a(i)=\beta^b(i)=j$ for some $a,b\in \mathbb{N}$. If at least one of $a$ or $b$ is even, then at least one of $\alpha^a$ or $\beta^b$ lies in $\Phi(Y)$ by (\ref{eq3}), which immediately gives that $v_i=v_j$ since $v\in \text{Fix}(\Phi(Y))$. Thus, we can assume without loss of generality that $a$ and $b$ are both odd. By comparing coordinates of both sides in (\ref{eq8}) and using the fact that $v,u^{(\beta)}\in \text{Fix}(\Phi(Y))$, we have
\begin{equation}
\label{eq10}
v_i+v_{\beta(i)}=u_i^{(\beta)}+u_{\alpha\beta(i)}^{(\beta)}.
\end{equation}
Since $v\in \text{Fix}(\Phi(Y))$, and $\beta^{b-1}\in \Phi(Y)$ by (\ref{eq3}) (recall that $b$ was odd), we have $v_{\beta(i)}=v_{\beta^b(i)}=v_j$, which, using (\ref{eq10}), yields to
\begin{equation}
\label{eq11}
v_i+v_j=u_i^{(\beta)}+u_{\alpha\beta(i)}^{(\beta)}.
\end{equation}
We also have $\alpha\beta(i)=\alpha\beta^{1-b}\beta^b(i)=\alpha\beta^{1-b}(j)$. Recalling that $u^{(\beta)}\in \text{Fix}(\Phi(Y))$ and that $1-a$ and $1-b$ are even, this gives
$$u^{(\beta)}_{\alpha\beta(i)}=u^{(\beta)}_{\alpha\beta^{1-b}(j)}=u^{(\beta)}_{\alpha(j)}=u^{(\beta)}_{\alpha^{-a}(j)} = u^{(\beta)}_i.$$
Finally, putting this in (\ref{eq11}), we obtain $v_i+v_j = u^{(\beta)}_i+u^{(\beta)}_i = 0$, hence $v_i=v_j$, as desired.
\end{proof}
We are finally ready to prove Theorem \ref{thm1.4}.
\begin{proof}[Proof of Theorem \ref{thm1.4}]
We will do induction on $|H|$. Note that the statement trivially holds for $|H|=1$ and $|H|=2$ since in those cases $H$ is cyclic. We now assume that the statement holds for $|H|=2^k$ for some $k\geq 1$, and to finish the inductive step, we will prove the statement for $|H|=2^{k+1}$. By the induction assumption and Lemma \ref{lem3.2}, if we take any maximal ideal $H_1\trianglelefteq H$, there exists $G_1\trianglelefteq G$ and $a\in K_n$ such that $H_1^a=G_1.$ Thus, conjugating $G$ by $a$ if necessary, we can assume without loss of generality that $H_1\trianglelefteq G$. Note that this also implies $\Phi(H)\leq G$ since $\Phi(H)$ is contained in $H_1$. The statement already holds if $H$ is cyclic, so we can assume without loss of generality that $H$ is non-cyclic, which gives two distinct maximal subgroups $H_1,H_2$ of $H$ with $H=\langle H_1, H_2\rangle$. By the induction assumption and Lemma \ref{lem3.2}, there exists $b\in K_n$ such that $H_2^b\trianglelefteq G$. By Lemma \ref{lemma_late1}, this yields to $G=\langle H_1,H_2^b\rangle$. By Lemma \ref{lem3.4}, then, we obtain $b\in C_{K_n}(\Phi(H))$. To prove that $H$ and $G$ are conjugate under $K_n$, by Lemma \ref{lem3.3}, we need to show that $b\in C_{K_n}(H_1)C_{K_n}(x)$ for some $x\in H_2\setminus H_1$. Setting $b=(u,1)$, $x=(v,s)$ for $u,v\in \mathbb{F}_2^{2^{n-1}}$, $s\in W_{n-1}$, by Lemma \ref{lem3.5}, this is equivalent to showing that
\begin{equation}
\label{eq12}
u\in \text{Fix}(\pi_n(H_1))+\text{Fix}(s),
\end{equation}
where we used the identification given in (\ref{eq2}) when writing $\text{Fix}(\pi_n(H))$.

Fix some $x\in H_2\setminus H_1$. For each $h_i\in H_1$, if we consider a maximal subgroup $H^{(i)}\trianglelefteq H$ of $H$ that contains $h_ix$, by the induction assumption and Lemma \ref{lem3.2}, there exists $c_i\in K_n$ such that $(H^{(i)})^{c_i}$ is a maximal subgroup of $G$. Since $G\cap K_n = \{\text{id}\}$ (by Lemma \ref{lem3.1}), $\Phi(H)\leq H_1\leq G$ and $\Phi(H)^{c_i}\leq (H^{(i)})^{c_i}\leq G$, this immediately gives $\Phi(H) = \Phi(H)^{c_i}$, i.e. $c_i\in C_{K_n}(\Phi(H))$. Recall that $H_2^b\leq G$, thus $x^b\in G$. Since both $(h_ix)^{c_i}$ and $h_ix^b$ lie in $G$, $\pi_n(h_ix^b)=\pi_n((h_ix)^{c_i})$, and $G\cap K_n = \{\text{id}\}$, we obtain
\begin{equation}
\label{eq13}
h_ix^b = (h_ix)^{c_i}
\end{equation}
for $i=1,\dots, |H_1|=2^k$. Setting $h_i = (w_i,t_i)$, $c_i = (z_i,1)$ (and recalling $x=(v,s)$, $b=(u,1)$), using Lemma \ref{lem3.6}, (\ref{eq13}) yields to
\begin{equation}
\label{eq14}
t_i(u)+t_is(u) = z_i + t_is(z_i)
\end{equation}
for $i=1,\dots 2^k$. Recall from above that $c_i\in C_{K_n}(\Phi(H))$, i.e., by Lemma \ref{lem3.5}, $z_i\in \text{Fix}(\pi_n(\Phi(H))) = \text{Fix}(\Phi(\pi_n(H)))$, where we used Lemma \ref{lemma_late2} in the last equality. Also recall from the first part of the proof that $b\in C_{K_n}(\Phi(H))$, i.e., by Lemma \ref{lem3.5}, $u\in \text{Fix}(\Phi(\pi_n(H)))$. If we now take $X=\langle t_1,\dots,t_{2^k}\rangle$, $Y=\pi_n(H)$, $\alpha=s$, $v=u$ in Proposition \ref{prop4.4}, recalling (\ref{eq14}), the vector $u$ satisfies both conditions in Proposition~\ref{prop4.4}. It immediately follows that
\[u\in \text{Fix}(\la t_1,\dots,t_{2^k}\ra)+\text{Fix}(s)=\text{Fix}(\pi_n(H_1))+\text{Fix}(s),\] 
which completes the proof of Theorem~\ref{thm1.4} by (\ref{eq12}).
\end{proof}
\section{A conjecture}
\label{sec:sec6}
We close the paper by a conjecture based on extensive Magma computations.
\begin{conj}
\label{conj:odom}
For $n\geq 1$, let $H\leq W_n$ be a subgroup of $W_n$. Suppose that $H$ contains a permutation $\sigma\in W_n$ that acts transitively on $\{1,\dots,2^n\}$. Then $\mathcal{P}(H,G)$ holds for any subgroup $G\leq W_n$.
\end{conj}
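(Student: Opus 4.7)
The hypothesis forces $\sigma$ to be a single $2^n$-cycle of order $2^n$; writing $\sigma=(v,s)$ via (\ref{eq1}) with $s=\pi_n(\sigma)\in W_{n-1}$, the projection $s$ is itself transitive on $\{1,\dots,2^{n-1}\}$ and of order $2^{n-1}$. Consequently $K_n$ becomes a cyclic $\mathbb{F}_2[\langle s\rangle]$-module isomorphic to $\mathbb{F}_2[x]/(x-1)^{2^{n-1}}$, whose $\langle s\rangle$-submodules form a unique chain. A short computation with $\sigma^k=((1+s+\cdots+s^{k-1})v,s^k)$ yields $\sigma^{2^{n-1}}=(N_s(v),1)$, where $N_s=1+s+\cdots+s^{2^{n-1}-1}$ satisfies $sN_s=N_s$ and hence $N_s(v)\in\Fix(s)=\{0,\mathbf{1}\}$; transitivity of $\sigma$ excludes $0$, so $\sigma^{2^{n-1}}=z:=(\mathbf{1},1)$, which is the unique nontrivial central element of $W_n$. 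In particular $\langle z\rangle\leq H\cap K_n$ automatically, and $C_{K_n}(\sigma)=\langle z\rangle$ has order only $2$. Applying the elementwise conjugacy to $\sigma$, after replacing $G$ by a suitable $K_n$-conjugate of itself one may assume $\sigma\in G$, and hence $\langle\sigma\rangle\leq G$ and $z\in G$.

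The overall plan is to imitate the proof of Theorem~\ref{thm1.4}, inducting on $|H|$. The cyclic case is immediate. For the inductive step with $H$ non-cyclic, select two distinct maximal subgroups $H_1,H_2\trianglelefteq H$ with $H=\langle H_1,H_2\rangle$, arranging that $\sigma$ lies in at least one of them (always possible since $H$ non-cyclic has at least one maximal subgroup containing $\sigma$). Apply the inductive hypothesis to $H_1$, which still contains the transitive $\sigma$, to obtain a $K_n$-conjugator $a_1$ with $H_1^{a_1}\leq G$; a separate argument must produce a conjugator $a_2$ with $H_2^{a_2}\leq G$. Then Lemmas~\ref{lem3.5} and \ref{lem3.6} translate the compatibility $a_1\equiv a_2$ modulo appropriate centralizers into a vector-space identity in $\mathbb{F}_2^{2^{n-1}}$, and analogues of Propositions~\ref{prop4.1} and \ref{prop4.4} should produce a common conjugator lying in $\Fix(\pi_n(H_1))+\Fix(s)$, matching the final line of the proof of Theorem~\ref{thm1.4}.

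The main obstacle is twofold. First, Lemmas~\ref{lem3.1}--\ref{lem3.4}, \ref{lemma_late1}, and \ref{lemma_late2} all require $H\cap K_n=\{\text{id}\}$, which fails here since $\langle z\rangle\leq H\cap K_n$; nor is $|H|=|G|$ assumed. Correct replacements for these lemmas must be developed from scratch, and this is where I expect the bulk of the technical work to lie. Second, if $H_2$ fails to contain a transitive element then it falls outside the inductive hypothesis and needs to be treated separately. The most promising route to handle both issues simultaneously is to exploit the rigidity forced by transitivity: the nonempty affine subspaces $A_h:=\{a\in K_n:h^a\in G\}$ are the preimages of translates of $G\cap K_n$ under the $\mathbb{F}_2$-linear maps $a\mapsto[h,a]$, and since the $\langle s\rangle$-invariant subspaces of $K_n$ form a unique chain containing $H\cap K_n$ as one of its terms, the $A_h$ should be forced to be cosets of a common $\langle s\rangle$-submodule, making $\bigcap_{h\in H}A_h$ automatically nonempty. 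Pushing this rigidity through the maximal-subgroup induction, and showing that the centralizer structure tied to $\sigma$ really does impose the necessary alignment on the $A_h$, is the step I expect to require a genuinely new idea.
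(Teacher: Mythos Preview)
The statement you are attempting to prove is Conjecture~\ref{conj:odom}, and the paper does \emph{not} contain a proof of it. The author explicitly presents it as an open problem supported only by Magma computations, and explains in Section~\ref{sec:sec6} why it would be useful for the arithmetic application. There is therefore no ``paper's own proof'' to compare against.

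Your preliminary observations are correct and useful: a transitive $\sigma\in W_n$ is a $2^n$-cycle, its projection $s=\pi_n(\sigma)$ is a $2^{n-1}$-cycle, $K_n$ is the regular $\FF_2[\langle s\rangle]$-module $\FF_2[x]/(x-1)^{2^{n-1}}$ with a totally ordered submodule lattice, and $\sigma^{2^{n-1}}=z=(\mathbf{1},1)$ is the nontrivial central involution, forcing $z\in H\cap K_n$. These facts are solid and would likely be the starting point of any genuine attack on the conjecture.

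However, your proposal is not a proof, and you candidly say so. The two obstacles you list are real and serious:
\begin{itemize}
\item The entire lemma apparatus of Section~\ref{sec:sec4} (Lemmas~\ref{lem3.1}--\ref{lem3.4}, \ref{lemma_late1}, \ref{lemma_late2}) rests on $H\cap K_n=\{\text{id}\}$ and $|H|=|G|$, both of which fail here. You offer no replacement lemmas, only the assertion that ``correct replacements must be developed from scratch.''
\item Your inductive scheme requires one maximal subgroup $H_1$ containing $\sigma$ (fine), but the second maximal subgroup $H_2$ may not contain any transitive element, so the inductive hypothesis does not apply to it. You propose no workaround beyond saying this ``needs to be treated separately.''
\end{itemize}
The module-theoretic rigidity idea in your final paragraph --- that the affine sets $A_h=\{a\in K_n:h^a\in G\}$ are cosets of submodules in a single chain and hence should have nonempty intersection --- is suggestive but not an argument: cosets of nested subgroups need not intersect (e.g.\ $0+\{0\}$ and $1+\FF_2$ in $\FF_2$ already fail once translations differ), and you have not shown that the translates are compatible. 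This is precisely the global-versus-local obstruction the conjecture is about, so the proposal essentially restates the difficulty rather than resolving it.

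In summary: this is an open conjecture in the paper, your setup is sound, but the proposal is an outline with self-identified gaps rather than a proof.
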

The author's interest in Conjecture~\ref{conj:odom} comes from the following: It has the potential of being particularly useful for the number theory application mentioned in Section~\ref{sec:sec2}, because for most polynomials $f$, for $n\geq 1$, the group $G_n(f)$ \emph{does} contain an element that acts transitively on $\{1,\dots,2^n\}$. Thus, in such cases, Conjecture~\ref{conj:odom} will imply that a conjugate of $G_n(f)$ is contained in $M_n(f)$ as long as $G_n(f)$ is elementwise $K_n$-conjugate into $M_n(f)$.
\subsection*{Acknowledgments}
The author would like to thank Nigel Boston for many helpful discussions related to the material in this paper. The author also thanks Rafe Jones for a helpful discussion related to this work. Finally, the author thanks the anonymous referee for the comments which helped improve the exposition.
	
\end{document}